\documentclass[oneside,english]{amsart}
\usepackage[english]{babel}
\usepackage{amsthm,gensymb,hyperref}
\usepackage{tikz}
\usepackage{tikz}
\usetikzlibrary{matrix}
\usetikzlibrary{positioning,arrows.meta} 
\usetikzlibrary{positioning}
\usepackage{multicol}
\usepackage{extarrows}%xlongequal
\usepackage{mathtools}%xrightarrow
\usepackage{commath}%norm
\usepackage{amssymb}
\usepackage{amscd}
\newtheorem{theorem}{Theorem}[section]
\theoremstyle{definition}
\newtheorem{definition}[theorem]{Definition}
\theoremstyle{plain}

\theoremstyle{plain}

\theoremstyle{plain}

\theoremstyle{remark}

\theoremstyle{remark}

\theoremstyle{remark}
\begin{document}

\title[Twelve characterisations of Killing vector fields in Finsler geometry]{Twelve characterisations of Killing vector fields in Finsler geometry}

\author[Bucataru]{Ioan Bucataru}
\address{Faculty of  Mathematics \\ Alexandru Ioan Cuza University \\ Ia\c si, 
  Romania}
\email{bucataru@uaic.ro}
\urladdr{http://orcid.org/0000-0002-8506-7567}

\author[Cre\c tu]{Georgeta Cre\c tu}
\address{Department of  Mathematics \\ Gheorghe Asachi Technical University \\ Ia\c si, 
  Romania}
\email{cretuggeorgeta@gmail.com}
\urladdr{http://orcid.org/0000-0003-4197-0268}

\author[Daniliuc]{Luciana Daniliuc}
\address{Faculty of  Mathematics \\ Alexandru Ioan Cuza University \\ Ia\c si, 
  Romania}
\email{daniliucluciana@yahoo.com.au}
%\urladdr{http://orcid.org/0000-0002-8506-7567}

\begin{abstract}

It is well known that infinitesimal isometries preserve many of the canonical geometric structures of a Riemannian or Finslerian manifold. In this paper, we identify those geometric structures of a Finslerian manifold that are preserved by an infinitesimal isometry (a Killing vector field) and whose preservation characterises such fields. Using the Fr\"olicher–Nijenhuis formalism as a unified geometric framework, we establish twelve equivalent characterisations of the Killing property in the Finslerian setting, utilizing the Finsler norm, the sphere bundle, contact and symplectic structures, Reeb and Hamiltonian vector fields, first integrals, the homogenized almost complex structure, and the Sasaki metrics. Furthermore, we derive the local expressions of these characterisations, yielding several equivalent forms of the Killing equations, including scalar, one-form and tensorial formulations. In our approach, none of these formulations is regarded as primary; each reflects a different geometric aspect of the Killing condition.
\end{abstract}
\subjclass[2000]{53C60, 53B40, 53C15}

\keywords{Finsler metric, Killing vector field, Hamiltonian vector field, first integral, Killing equations}

  \maketitle
\section{Introduction}

The study of infinitesimal isometries traces back to the work of W. Killing, who first derived the conditions under which the flow of a vector field preserves a Riemannian metric, leading to what are now known as Killing vector fields, characterized by the Killing equations \cite{K1890}. 

Since then, Killing vector fields have been studied intensively. An important contribution to this topic is due to Sasaki \cite{S1958}, who studied the geometry of the tangent bundle of a Riemannian manifold and characterised Killing vector fields in terms of their lifts, which are Killing vector fields with respect to what is now called the Sasaki metric. These aspects were recently revisited and extended by Albuquerque in \cite{A2019}, who showed that Killing vector fields preserve several other geometric structures living on the tangent bundle: contact and symplectic structures.

The geometric structures associated with a Finsler manifold are defined on the slit tangent bundle $T_0M$ rather than on the manifold itself. Consequently, the notion of infinitesimal isometries must also be formulated on the tangent bundle. The first extension of these concepts to Finsler geometry is due to Yano in \cite[Chaper VIII]{Y1955}, where he defines a motion (isometry) in a Finsler space as an extended point transformation which does not change the Finsler function. Consequently, Killing vector fields are naturally described by their complete lifts, whose flows act by isometries   \cite{AK2014}, \cite{C2023}, \cite{DH2002}, \cite{L2004}. In a recent paper \cite{C2023}, Crampin showed that in the Finslerian context, the complete lift of a Killing vector field preserves many induced geometric structures: the geodesic spray, the Sasaki metric, the homogenized almost-complex structure, and hence the almost K\"ahlerian structure.

The main purpose of this paper is to identify those canonical geometric structures whose invariance under the complete lift of a vector field is equivalent to the Killing property. In Theorem \ref{thmKglobal}, we propose twelve such characterisations, which state that a vector field $X\in \mathfrak{X}(M)$ is a Killing vector field if and only if its complete lift either preserves the Finsler norm, the sphere bundle, the contact and symplectic structures, the Sasaki metric, the Reeb vector field, the homogenized almost-complex structure, or acts as a Hamiltonian vector field that induces a first integral. 

Our global approach is based entirely on the Frölicher–Nijenhuis formalism \cite{GM2000,SLK2014}, which provides a unified geometric framework for describing the canonical structures of a Finsler manifold. This allows us to derive the twelve equivalent characterisations from a common geometric setting rather than treating them separately.

In Theorem \ref{thmKlocal}, we reformulate the first nine conditions of Theorem \ref{thmKglobal} in local coordinates to provide various expressions of Killing equations. We obtain three types of Killing equations: four scalar Killing equations, three $1$-form Killing equations and two tensorial Killing equations.

\section{Fr\"olicher-Nijenhuis formalism on Finsler manifolds}

The twelve characterisations and the proof of Theorem \ref{thmKglobal} rely entirely on the Fr\"olicher--Nijenhuis formalism, including the Fr\"olicker-Nijenhuis bracket, algebraic and Lie derivations, and their commutation formulae. 

\subsection{A short introduction to the Fr\"olicher-Nijenhuis formalism.} For an $n$-dimensional manifold $M$, we denote by $C^{\infty}(M)$ the ring of smooth functions, and by $\mathfrak{X}(M)$, $\Lambda(M)$, and $\Psi(M)$ the Lie algebra of vector fields,  the Lie algebra of forms, and  the graded Lie algebra of vector-valued forms, respectively. 

We will use the Fr\"olicher-Nijenhuis formalism (FN for short) to derive all the geometric structures associated to a Finsler manifold, \cite{GM2000, SLK2014}. We begin by briefly recalling the basic concepts of the FN formalism. 

\begin{definition}
A derivation of degree $r\in \mathbb{Z}$ on $\Lambda(M)$ is an $\mathbb{R}$-linear map $D: \Lambda^k(M) \to \Lambda^{k+r}(M)$, which is a local operator and satisfies the graded Leibniz rule:

$D(\omega \wedge \theta) =D\omega \wedge \theta + (-1)^{kr}\omega \wedge D\theta, \forall \omega \in \Lambda^k(M).$
\end{definition}
If we consider the commutator of two derivations $D_1$ and $D_2$ of degrees $r_1$ and $r_2$,  
\begin{eqnarray*}
[D_1, D_2]=D_1\circ D_2 - (-1)^{r_1r_2} D_2\circ D_1,
\end{eqnarray*} 
then it satisfies the graded Jacobi identity:
\begin{eqnarray*} 
(-1)^{r_1r_3}[D_1,[D_2,D_3]] + (-1)^{r_2r_1}[D_2,[D_3,D_1]] + (-1)^{r_3r_2}[D_3,[D_1,D_2]]=0.
\end{eqnarray*} 
Consequently, $\operatorname{Der}(\Lambda(M))$ is a graded Lie algebra.

Due to their properties, any derivation is uniquely determined by its action on functions and $1$-forms. The Fr\"olicher-Nijenhuis formalism relies on the fact that there are only two types of derivations, an algebraic derivation and a Lie derivation, and each of them can be identified with a vector-valued form \cite{FN1956, KMS1993}.

Algebraic derivations (or $i_*$-derivations) are trivial on $C^{\infty}(M)$ and hence are uniquely determined by their action on $1$-forms. For a vector-valued $l$-form $L\in \Psi^l(M)$, the  corresponding unique  algebraic derivation is of degree $l-1$ and is given by $i_L: \Lambda^k(M) \to \Lambda^{k+l-1}(M)$, 
\begin{eqnarray*}
 & i_L\omega(X_1,...,X_{l+k-1}) = \\
 & \frac{1}{l!(k-1)!} 
\sum_{\sigma \in \mathcal{S}_{l+k-1}} \varepsilon(\sigma) \omega(L(X_{\sigma(1)},..., X_{\sigma(l)}), X_{\sigma(l+1)},...,X_{\sigma{(l+k-1)}}).
\end{eqnarray*}

Lie derivations (or $d_*$-derivations) commute with the exterior derivative $d$ and hence are uniquely determined by their action on functions. For a vector-valued $l$-form $L\in \Psi^l(M)$, the  corresponding unique Lie derivation is of degree $l$ and is given by $d_L: \Lambda^k(M) \to \Lambda^{k+l}(M)$, 
\begin{eqnarray*}
d_L=[i_L, d].
\end{eqnarray*}

For two vector-valued forms $L\in \Psi^l(M)$ and $K\in \Psi^k(M)$, the commutator of their induced Lie derivations $[d_L, d_K]$ is the Lie derivation of a uniquely defined $(k+l)$-form $[L,K]\in \Psi^{k+l}(M)$, which is called the Fr\"olicher-Nijenhuis bracket of $L$ and $K$:
\begin{eqnarray}
[d_L,d_K]=d_{[L,K]}. \label{dlk}
\end{eqnarray}
The FN-bracket endows $\Psi(M)$ with the structure of a graded Lie algebra, isomorphic to the graded Lie algebra of Lie derivations on  $\Lambda(M)$. 

For a vector-valued form $L\in \Psi^l(M)$, its Nijenhuis torsion is given by 
\begin{eqnarray*}
N_L=\frac{1}{2}[L,L],
\end{eqnarray*} 
and characterises its integrability. For $X\in \mathfrak{X}(M)$ and $L\in \Psi(M)$, the FN-bracket $[X, L]=\mathcal{L}_XL$ is the usual Lie derivative.

\subsection{The geometric setting of a Finsler manifold.} For an $n$-dimensional manifold $M$, we consider its tangent bundle $TM$, which is a $2n$-dimensional manifold, endowed with local coordinates $(x^i, y^i)$. We will also consider $T_0M=TM\setminus \{0\}$, the tangent bundle with the zero section removed. 

On $TM$ there are two important natural structures. The tangent structure (or vertical endomorphism) $\mathcal{T}=\frac{\partial}{\partial y^i}\otimes dx^i$ is an integrable vector-valued $1$-form, meaning that its Nijenhuis torsion vanishes: $[\mathcal{T},\mathcal{T}]=0$, and it has the property $\mathcal{T}^2=0$. Throughout this work, we will use the summation convention over repeated covariant and contravariant indices. 

The Liouville (dilation) vector field $\mathcal{C}= y^i \frac{\partial}{\partial y^i}$ plays a fundamental role in characterising, via Euler's theorem, positively homogeneous tensors on $T_0M$ with respect to the fibre coordinates. A tensor $A$ on $T_0M$ is positively homogeneous of order $p\in \mathbb{Z}$ if $\mathcal{L}_{\mathcal{C}}A=pA$. 

One can easily check that $[\mathcal{C}, \mathcal{T}]=-\mathcal{T}$, and hence the tangent structure $\mathcal{T}$ is $(-1)$-homogeneous. 

\begin{definition} \label{defnFinsler}
A Finsler function $F:TM \to \mathbb{R}$ is given by a family of norms $F: x\in M \to F_x: T_xM \to \mathbb{R_+}$ that satisfies:
\begin{itemize}
\item[$F_1)$] $F$ is smooth on $T_0M=TM \setminus \{0\}$ and continuous on $TM$;
\item[$F_2)$] $F$ is positively $1$-homogeneous: $F(x,\lambda y)=\lambda F(x,y)$, $\forall \lambda \geq 0$;
\item[$F_3)$] the metric tensor $g_{ij}(x,y)=\dfrac{1}{2}\dfrac{\partial^2 F^2}{\partial y^i \partial y^j}(x,y)$ is non-degenerate (or positive definite).
\end{itemize}
\end{definition}
 
The homogeneity of the Finsler metric yields several identities that relate the Finsler norm $F$ and the metric tensor $g_{ij}$, which are very similar to those of the Riemannian (quadratic) case:
\begin{eqnarray}
F^2(x,y)=g_{ij}(x,y)y^iy^j, \quad y_i:=g_{ij}y^j=F\frac{\partial F}{\partial y^i}. \label{fgij}
\end{eqnarray} 
Throughout this paper, indices are raised and lowered using the metric tensor $g_{ij}$. For example, if $X^i$ are the components of a vector field, we denote by $X_i:=g_{ij}X^j$ the components of the corresponding covector field. 

Alongside the metric tensor $g_{ij}$, we consider the \emph{angular metric tensor}:
\begin{eqnarray}
h_{ij}=F\frac{\partial^2F}{\partial y^i\partial y^j}=g_{ij}-\frac{\partial F}{\partial y^i} \frac{\partial F}{\partial y^j} = g_{ij}-\frac{1}{F^2} y_iy_j. \label{hij} 
\end{eqnarray}
By homogeneity of the Finsler function, we obtain that $h_{ij}y^i=0$ and the regularity condition $F_3)$ is equivalent to the fact that the angular metric has rank $n-1$ (or is positive semi-definite), \cite{C2011}.  
 
The regularity condition $F_3)$ is equivalent to the fact that the Euler-Lagrange $2$-form of the energy function $E=\frac{1}{2}F^2$, 
 \begin{eqnarray}
\omega_E=dd_\mathcal{T}E=d(g_{ij}y^j)\wedge dx^i \label{omegae}
\end{eqnarray}
is a symplectic $2$-form on $T_0M$. 

The variational problem for the energy function $E$ leads to the Euler-Lagrange equations, which uniquely determine a second-order vector field (the geodesic spray) $S\in \mathfrak{X}(T_0M)$:
\begin{eqnarray} \label{lsdje}
\mathcal{L}_Sd_\mathcal{T}E-dE=0.
\end{eqnarray}   
The Euler-Lagrange equation \eqref{lsdje} can be reformulated using the symplectic $2$-form as
\begin{eqnarray} \label{isddje}
i_Sdd_\mathcal{T}E=-dE,
\end{eqnarray}
showing that the geodesic spray $S$ is the Hamiltonian vector field associated with the energy function $E$. Directly, from the equation \eqref{isddje}, we find that $S(E)=0$, meaning that the energy is constant along the geodesics.

The geodesic spray $S$ is $1$-homogeneous, $[\mathcal{C}, S]=S$, and has the local expression: 
\begin{eqnarray*}
S=y^i\frac{\partial}{\partial x^i}  - 2G^i(x,y)\frac{\partial}{\partial y^i}.
\end{eqnarray*}
The homogeneity property of the spray $S$ is equivalent to the $2$-homogeneity of the spray coefficients $G^i(x,y)$.

The FN-bracket $\Gamma=[S,\mathcal{T}]$ was introduced by Grifone in \cite{G1972} as a nonlinear connection on $T_0M$.  It satisfies $\Gamma^2=\operatorname{Id}$ and therefore is an almost product structure with two eigenvalues $1$ and $-1$. The eigendistribution corresponding to $1$ is the vertical distribution (tangent to the fibres of the tangent bundle), while the eigendistribution corresponding to $-1$ will be called the horizontal distribution.  Therefore, the almost product structure can be expressed as follows:     
\begin{eqnarray}
\left[S, \mathcal{T}\right] & = & -h+v= -\frac{\delta}{\delta x^i} \otimes dx^i + 
 \frac{\partial}{\partial y^i} \otimes \delta y^i, \quad \textrm{where} \label{gamma}  \\
\frac{\delta}{\delta x^i}  & = & \frac{\partial}{\partial x^i} - N^j_i(x,y) \frac{\partial}{\partial y^j}, \quad \delta y^i = dy^i + N^i_j(x,y)dx^j, \quad N^i_j=\frac{\partial G^i}{\partial y^j}. \nonumber
\end{eqnarray} 
The almost product structure $\Gamma$, along with the horizontal and vertical projectors $h$ and $v$, are $0$-homogeneous, which is equivalent to the fact that the nonlinear coefficients $N^i_j(x,y)$ are $1$-homogeneous.

In formulae \eqref{gamma}, $\delta/\delta x^i$ are local generators for the horizontal distribution, while $\delta y^i$ are local annihilators for the horizontal distribution. We will refer to the basis $\{\delta/\delta x^i, \partial/\partial y^i\}$ as a basis adapted to the horizontal and vertical distributions. Its dual basis is $\{dx^i, \delta y^i\}$, which is very useful for expressing various geometric structures. For example, the symplectic structure \eqref{omegae} has the following expression:
\begin{eqnarray}
\omega_E=g_{ij} \delta y^i \wedge dx^j. \label{oedelta}
\end{eqnarray}
Expression \eqref{oedelta} shows that both the horizontal and vertical distributions are Lagrangian distributions with respect to $\omega_E$.

The geodesic spray $S$ is a horizontal vector field, $hS=S$, and this can be justified as follows:
\begin{eqnarray*}
-hS+vS=[S, \mathcal{T}](S)=[S, \mathcal{T}(S)]-\mathcal{T}[S,S]=[S, \mathcal{C}]=-S.
\end{eqnarray*}
The fact that the geodesic spray is horizontal can also be seen from its local expression: $S=y^i\delta/\delta x^i$.

An important result in Finsler geometry is the reformulation of the Euler-Lagrange equation \eqref{lsdje} (second-order PDE) in terms of the first-order PDE $d_hF=0$, which is equivalent to $d_hE=0$.
One can easily verify this equivalence as follows. The commutation formula \eqref{dlk} for a spray $S$ and the tangent structure $\mathcal{T}$ reads: $[\mathcal{L}_S, d_{\mathcal{T}}]=d_{[S, \mathcal{T}]}=d_{\Gamma}= d_{-h+v}.$ Therefore, the Euler-Lagrange equations \eqref{lsdje} are equivalent to the following equations:
\begin{eqnarray}
d_{\mathcal{T}}\mathcal{L}_SE + d_{\Gamma}E-dE=0 \stackrel{S(E)=0}{\Longleftrightarrow} d_hE=0 \Longleftrightarrow d_hF=0. \label{dhe}
\end{eqnarray} 

Another important structure induced by a Finsler manifold and its geodesic spray $S$ is the \emph{almost complex structure} on $T_0M$, given by: 
\begin{align}
J=h\circ [S, h] - \mathcal{T} = \frac{\delta}{\delta x^i} \otimes
  \delta y^i - \frac{\partial}{\partial y^i} \otimes dx^i. \label{acs}
\end{align}
As observed by Crampin in \cite{C2008}, the almost complex structure does not have a homogeneity degree. This can be rectified by considering
\begin{align}
J_0=\frac{1}{F} h\circ [S, h] - F\mathcal{T} = \frac{1}{F}\frac{\delta}{\delta x^i} \otimes
  \delta y^i - F \frac{\partial}{\partial y^i} \otimes dx^i. \label{acs0}
\end{align} 
The structure $J_0$ defined this way is $0$-homogeneous, meaning that $\mathcal{L}_{\mathcal C}J_0=0$. We will see in the next section that the homogenized almost complex structure $J_0$ is more useful for characterising Killing vector fields.

Using the almost complex structure $J$, the symplectic $2$-form $\omega_E$ determines a (semi-)Riemannian metric on $T_0M$, given by
\begin{align} & G(X, Y) = \omega_{E}(X, JY), \quad
  \omega_{E}(X,Y)= G(\mathcal{T}X,Y) - G(X,\mathcal{T}Y), \label{gomega} \\ 
&  G = g_{ij}dx^i\otimes dx^j + g_{ij}\delta y^i\otimes \delta y^j. \nonumber
\end{align}
The metric $G$ is called the Sasaki metric induced by the Finsler function $F$. With respect to this metric, the horizontal and vertical distributions are mutually orthogonal. 

The Sasaki metric $G$ is not homogeneous. We can correct this by replacing the almost complex structure $J$ with the homogenized almost complex structure $J_0$ in formulae \eqref{gomega}:  
\begin{align} & G_1(X, Y) = \omega_{E}(X, J_0Y), \quad
  \omega_{E}(X,Y)= G_1(\mathcal{T}X,Y) - G_1(X,\mathcal{T}Y), \label{g1omega} \\ 
&  G_1 = Fg_{ij}dx^i\otimes dx^j + \frac{1}{F}g_{ij}\delta y^i\otimes \delta y^j. \nonumber
\end{align}
The Sasaki metric $G_1$ has been considered by Crampin in \cite{C2008} and it is $1$-homogeneous.

The non-degeneracy condition $F_3)$ of Definition \ref{defnFinsler} is also equivalent to the fact that the Hilbert $1$-form 
\begin{eqnarray*}
d_\mathcal{T}F=\frac{\partial F}{\partial y^i}dx^i=\frac{1}{F}g_{ij}y^jdx^i
\end{eqnarray*}
 is a contact $1$-form on the $(2n-1)$-dimensional sphere bundle $SM=\{(x,y)\in T_0M \mid F(x,y)=1\}$. Since the the restriction of the normalized geodesic spray $S/F$ to $SM$ satisfies
\begin{eqnarray*}
i_{S/F}d_{\mathcal{T}}F=1, \quad i_{S/F}dd_{\mathcal{T}}F=0,
\end{eqnarray*}
it follows that $S/F$ is the Reeb vector field of the contact manifold $(SM, d_{\mathcal{T}}F)$.

For the hypersurface $SM\subset T_0M$, its tangent bundle $TSM$ gives a $(2n-1)$-distribution on $T_0M$, which is orthogonal to the Liouville vector field $\mathcal{C}$, with respect to either Sasaki metric $G$ or $G_1$.

\section{Killing vector fields}

\subsection{Complete lifts and geometric formulations}
In Riemannian geometry, a vector field is a Killing vector field if its flow consists of isometries \cite{K1890}. In other words, a vector field is Killing if and only if its corresponding Lie transport preserves the Riemannian metric $g$, or the induced norm, along its integral curves. This leads to the classical characterisation of Killing vector fields for a Riemannian metric $g$: $$\mathcal{L}_Xg=0.$$

In Finsler geometry, most of the induced geometric structures are defined on the slit tangent bundle $T_0M$, or on the sphere bundle $SM$. Therefore, when extending the notion of Killing vector fields to the Finslerian setting, we must look for isometries acting on $T_0M$. 

For a vector field $X\in \mathfrak{X}(M)$ with local flow $\varphi_t=\operatorname{Fl}_t^X$, the flow of its complete lift $X^c\in \mathfrak{X}(TM)$ is given by $(\varphi_t, (\varphi_t)_{\ast})=\operatorname{Fl}_t^{X^c}$. According to Yano \cite{Y1955}, we consider  the following definition of a Finslerian Killing vector field.
\begin{definition}
A vector field $X \in \mathfrak{X}(M)$ is called a \emph{Killing vector field} for a Finsler manifold $(M, F)$ if the flow of its complete lift $X^c$ preserves the Finslerian norm:
\begin{equation*}\label{eq:DefKilling}
F\left(\varphi_t(x), (\varphi_t)_{*,x}(y)\right) = F(x,y), \quad \forall (x,y) \in T_0M, \; \forall t.
\end{equation*}
\end{definition}
This yields the classical characterisation in the Finslerian setting: a vector field $X\in \mathfrak{X}(M)$ is a Killing vector field for a Finsler metric $F$ if and only if $\mathcal{L}_{X^c}F=0$. 

For a vector field $X\in \mathfrak{X}(M)$, its complete lift $X^c$ is the unique vector field on $TM$ characterised by the following properties:
\begin{itemize}
\item[$CL_1)$] It is $0$-homogeneous: $[\mathcal{C}, X^c]=0$.
\item[$CL_2)$] It commutes with the tangent structure $\mathcal{T}$: $[X^c, \mathcal{T}]=0$.
\end{itemize}
The commutation condition $CL_2)$ is equivalent to the commutation of the two Lie derivatives $\mathcal{L}_{X^c}$ and $d_\mathcal{T}$:
\begin{eqnarray}
\mathcal{L}_{X^c}d_\mathcal{T} - d_\mathcal{T} \mathcal{L}_{X^c} = [\mathcal{L}_{X^c}, d_\mathcal{T}]=d_{[X^c, \mathcal{T}]}=0. \label{lxcdt}
\end{eqnarray}
An important  property of the complete lift with respect to any spray $S \in \mathfrak{X}(T_0M)$ (such as the canonical geodesic spray) is given by:
\begin{eqnarray} \label{tsxc}
\mathcal{T}[S, X^c]=0.
\end{eqnarray}
Equation \eqref{tsxc} implies that the horizontal component of $[S, X^c]$ vanishes, and hence $[S,X^c]$ is a vertical vector field. This property follows directly from the defining properties $CL_1)$ and $CL_2)$ of the complete lift:
\begin{eqnarray*}
0=[X^c,\mathcal{T}](S)=[X^c, \mathcal{C}]-\mathcal{T}[X^c, S]= -\mathcal{T}[X^c, S].
\end{eqnarray*} 
An immediate consequence of \eqref{tsxc} is the following commutation rule:
\begin{eqnarray}
i_{X^c}\mathcal{L}_{S}d_{\mathcal{T}}f - \mathcal{L}_{S}i_{X^c}d_{\mathcal{T}}f= i_{[X^c,S]}d_{\mathcal{T}}f = i_{\mathcal{T}[X^c, S]}df=0, \ \forall f \in C^{\infty}(TM). \label{ildt}
\end{eqnarray}
For a vector field $X\in \mathfrak{X}(M)$, its vertical lift is given by $X^v=\mathcal{T}X^c$ and it is a $(-1)$-homogeneous vector field.

\subsection{Main Characterisation Theorem: the global approach}
The following theorem establishes that 
Finslerian Killing vector fields are precisely those whose complete lifts preserve the canonical structures of a Finsler manifold: the Finsler norm, the sphere bundle, the energy, the symplectic structure, the contact structure, the Reeb vector field, the Sasaki metric, and the homogenized almost complex structure. 
\begin{theorem} \label{thmKglobal}
Consider $(M,F)$ a Finsler manifold and $X\in \mathfrak{X}(M)$. The following statements are equivalent:
\begin{itemize}
\item[i)] $X$ is a Killing vector field: $X^c(F)=0$;
\item[ii)] $X^c$ is tangent to the sphere bundle $SM$: $X^c\in \mathfrak{X}(SM)$;
\item[iii)] $f_1=X^v(E)$ is a first integral for the geodesic spray $S$: $S(X^v(E))=0$;
\item[iv)] $f_0=X^v(F)$ is a first integral for any projectively related spray $\widetilde{S}$: $\widetilde{S}(X^v(F))=0$;
\item[v)] $X^c$ is a contact vector field: $\mathcal{L}_{X^c}d_\mathcal{T}F=0$;
\item[vi)] $X^c$ preserves the Poincar\'e-Cartan $1$-form $d_\mathcal{T}E$: $\mathcal{L}_{X^c}d_\mathcal{T}E=0$;
\item[vii)] $X^c$ is a Hamiltonian vector field: $\exists f_1\in C^{\infty}(T_0M)$, $i_{X^c}\omega_E=-df_1$; 
\item[viii)] $X^c$ is a symplectic vector field: $\mathcal{L}_{X^c}dd_\mathcal{T}E=0$;
\item[ix)] $X^c$ is a Killing vector field for the Sasaki metric $G$: $\mathcal{L}_{X^c}G=0$;
\item[x)] $X^c$ is a Lie symmetry for the Reeb vector field: $[X^c, S/F]=0$;
\item[xi)] $X^c$ is almost analytic with respect to the homogenized almost complex structure $J_0$: $[X^c, J_0]=0$;
\item[xii)] $X^c$ is a Killing vector field for the homogenized Sasaki metric $G_1$: $\mathcal{L}_{X^c}G_1=0$.
\end{itemize}
\end{theorem}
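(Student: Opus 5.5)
Statement i) will serve as the hub, and each other condition will be shown equivalent to it, mostly by one–line Frölicher–Nijenhuis computations using $CL_1)$, $CL_2)$, \eqref{lxcdt}, \eqref{tsxc} and \eqref{ildt}.

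\textbf{Homogeneous and contact block (ii, v, vi, vii, viii).} Since $X^c$ is $0$-homogeneous, $X^c(F)$ is $1$-homogeneous. Hence $X^c(F)$ vanishes on $SM$ if and only if it vanishes on $T_0M$, which gives i)$\Leftrightarrow$ii). By \eqref{lxcdt} we have $\mathcal{L}_{X^c}d_\mathcal{T}F=d_\mathcal{T}(X^c F)$ and $\mathcal{L}_{X^c}d_\mathcal{T}E=d_\mathcal{T}(X^c E)=d_{\mathcal T}(F\,X^cF)$. Since $X^c(E)=FX^c(F)$ and $d_{\mathcal T}$ acts on functions by vertical differentiation, i) implies v) and vi). For the converse, note that a function $\phi$ with $d_\mathcal{T}\phi=0$ is fibrewise constant, and if it is also homogeneous of positive degree it must vanish. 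Both $X^cF$ ($1$-homogeneous) and $X^cE$ ($2$-homogeneous) have this property, so v) and vi) each give back i). Next, vi)$\Rightarrow$viii) follows by applying $d$. For viii)$\Rightarrow$vii), $i_{X^c}\omega_E$ is closed, and the homogeneity of this closed form lets us write it as $-d f_1$ with $f_1=-\frac12 i_{\mathcal C}i_{X^c}\omega_E$. Concretely, Cartan's formula applied to $\mathcal L_{\mathcal C}$, together with $\mathcal L_{\mathcal C}\omega_E=\omega_E$ and $[\mathcal C,X^c]=0$, gives $\mathcal L_{\mathcal C}(i_{X^c}\omega_E)=i_{X^c}\omega_E$, so that $i_{X^c}\omega_E=d\,i_{\mathcal C}i_{X^c}\omega_E$. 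For vii)$\Rightarrow$i) we write $i_{X^c}\omega_E=i_{X^c}dd_{\mathcal T}E=\mathcal L_{X^c}d_\mathcal{T}E-d(X^vE)$. Then vii) says that $d_\mathcal{T}(X^cE)=\mathcal L_{X^c}d_\mathcal{T}E$ is exact. Its $d$ is $dd_\mathcal{T}(X^cE)=0$, and I expect to conclude that $X^cE$ is a fibrewise affine $2$-homogeneous function, hence zero. This is the step to check most carefully.

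\textbf{First integrals and the Reeb field (iii, iv, x).} For iii), use $X^v(E)=i_{X^c}d_\mathcal{T}E$ and \eqref{ildt} to get
\[
S(X^vE)=i_{X^c}\mathcal L_Sd_\mathcal{T}E=i_{X^c}dE=X^c(E)
\]
by \eqref{lsdje}, which gives iii)$\Leftrightarrow$i). For iv), a projectively related spray has the form $\widetilde S=S-2P\mathcal C$. Since $X^v(F)$ is $0$-homogeneous, $\mathcal C(X^vF)=0$, so $\widetilde S(X^vF)=S(X^vF)$. Then \eqref{ildt} together with $\mathcal L_Sd_\mathcal{T}F=dF$ (a consequence of $d_hF=0$ from \eqref{dhe}) gives $S(X^vF)=X^c(F)$. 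For x), compute
\[
[X^c,S/F]=\frac1F[X^c,S]-\frac{X^c(F)}{F^2}S .
\]
The first term is vertical by \eqref{tsxc} and the second is horizontal, so x) is equivalent to $X^c(F)=0$ together with $[X^c,S]=0$. Thus I must also show that i) implies $[X^c,S]=0$. For this, apply $\mathcal L_{X^c}$ to \eqref{isddje}: using viii) and $X^cE=0$ we get $i_{[X^c,S]}\omega_E=0$, and non-degeneracy of $\omega_E$ gives $[X^c,S]=0$.

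\textbf{Almost complex structure and Sasaki metrics (ix, xi, xii).} Once i) holds, $X^c$ commutes with $S$, $\mathcal T$, $\mathcal C$ and $F$. Hence it commutes with $\Gamma=[S,\mathcal T]$, thus with $h$ and $v$, with $h\circ[S,h]$, and therefore with $J$ and $J_0$; it also preserves $\omega_E$. Formulas \eqref{gomega} and \eqref{g1omega} then give ix) and xii). For the converses I will evaluate on $\mathcal C$. From xi), applying $[X^c,J_0]=0$ to $S$, and using $J_0S=-F\mathcal C$ together with $[X^c,\mathcal C]=0$, we obtain $[X^c,J_0](S)=-X^c(F)\mathcal C-J_0[X^c,S]=0$. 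Since $J_0$ maps vertical vectors to horizontal ones while $\mathcal C$ is vertical, both terms must vanish, giving $X^cF=0$. For ix) and xii), compute $(\mathcal L_{X^c}G)(\mathcal C,\mathcal C)=X^c(G(\mathcal C,\mathcal C))-2G([X^c,\mathcal C],\mathcal C)=X^c(F^2)$, and similarly $(\mathcal L_{X^c}G_1)(\mathcal C,\mathcal C)=X^c(F)$. So each of ix) and xii) forces i). The main obstacle I anticipate is the vii)$\Rightarrow$i) step and the verification that $\mathcal L_{X^c}$ commutes with $h\circ[S,h]$; both should follow from the graded Jacobi identity for FN brackets.
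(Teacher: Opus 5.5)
Your proposal is correct. Several steps coincide with the paper's:
\begin{itemize}
\item the identity $S(X^vE)=X^c(E)$ from \eqref{ildt} for iii);
\item the horizontal/vertical splitting of $[X^c,S/F]$ for x);
\item evaluating $[X^c,J_0]$ on $S$ for xi);
\item the forward implications i)$\Rightarrow$ix), xi), xii), via $[X^c,S]=0$ and the Jacobi identity.
\end{itemize}

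The overall organisation and several converses are genuinely different. The paper argues along a chain of implications:
\begin{itemize}
\item viii)$\Rightarrow$vi) by contracting with $\mathcal C$ and using $[i_{\mathcal C},\mathcal L_{X^c}]=0$;
\item vii)$\Rightarrow$vi) by first identifying the Hamiltonian as $X^v(E)$;
\item ix)$\Rightarrow$viii) and xii)$\Rightarrow$viii) via the second formulas in \eqref{gomega}, \eqref{g1omega} and $\mathcal L_{X^c}\mathcal T=0$;
\item iv) via $f_1=Ff_0$.
\end{itemize}
You instead route everything through i) and use Euler's theorem systematically:
\begin{itemize}
\item $\mathcal L_{X^c}d_{\mathcal T}\phi=d_{\mathcal T}(X^c\phi)$ plus positive homogeneity for v) and vi);
\item a homogeneous Poincar\'e lemma for viii)$\Rightarrow$vii);
\item a Hessian argument for vii)$\Rightarrow$i);
\item the evaluations $(\mathcal L_{X^c}G)(\mathcal C,\mathcal C)=X^c(F^2)$ and $(\mathcal L_{X^c}G_1)(\mathcal C,\mathcal C)=X^c(F)$ for ix) and xii).
\end{itemize}

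The last of these is noticeably shorter than the paper's argument, since it uses only $CL_1)$ and $G(\mathcal C,\mathcal C)=F^2$. Your vii)$\Rightarrow$i) never needs to identify the Hamiltonian. The paper's route must normalise $f_1$ to be $1$-homogeneous, which holds only up to an additive constant. Your i)$\Leftrightarrow$ii), via the $1$-homogeneity of $X^c(F)$, is also more careful than the paper's one-line remark. In exchange, the paper's route stays entirely inside the FN calculus. It also shows directly that preserving $G$ or $G_1$ already forces preservation of $\omega_E$.

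There are two small corrections. First, the step you flagged closes in one line. It is a homogeneity argument, not a consequence of the graded Jacobi identity. For $\phi=X^c(E)$, the $dy^j\wedge dx^i$ component of $dd_{\mathcal T}\phi$ is $\partial^2\phi/\partial y^i\partial y^j$. So $dd_{\mathcal T}\phi=0$ kills the vertical Hessian, and Euler's theorem gives $2\phi=y^iy^j\,\partial^2\phi/\partial y^i\partial y^j=0$. Since $\mathcal L_{X^c}\omega_E=dd_{\mathcal T}(X^cE)$ by \eqref{lxcdt}, the same line gives viii)$\Rightarrow$i) directly.

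Second, the factor $\frac12$ in $f_1=-\frac12 i_{\mathcal C}i_{X^c}\omega_E$ is spurious. The form $i_{X^c}\omega_E$ is $1$-homogeneous, not $2$-homogeneous. As your next sentence correctly shows, $f_1=-i_{\mathcal C}i_{X^c}\omega_E=i_{X^c}d_{\mathcal T}E=X^v(E)$.
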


\begin{proof}
The logical flowchart of the proof for the twelve equivalences is given below: 
\begin{figure}[htbp]
\centering
\begin{tikzpicture}[
    >=Latex,
    % Spațiere mărită pentru a evita suprapunerea textului
    x=4.0cm, y=1.5cm,
    % Fără chenare, aliniere pe centru
    every node/.style={font=\small, align=center, draw=none, inner sep=2pt}, 
    group/.style={font=\itshape, align=center},
    eq/.style={<->, thick, shorten <=3pt, shorten >=3pt},
    imp/.style={->, thick, shorten <=3pt, shorten >=3pt}
]

% TITLURI
\node[group] at (0, 3) {Scalar Conditions};
\node[group] at (1, 3) {1-Form Conditions};
\node[group] at (2.2, 3) {Tensor \&\\Homogenized Conditions};

% COLOANA 1
\node (ii)  at (0, 2)  {$\mathrm{ii)}\ X^c\in\mathfrak{X}(SM)$};
\node (i)   at (0, 1)  {$\mathrm{i)}\ X^c(F)=0$};
\node (iii) at (0, 0)  {$\mathrm{iii)}\ S(X^{v}(E))=0$};
\node (iv)  at (0,-1)  {$\mathrm{iv)}\ \widetilde{S}(X^{v}(F))=0$};

% COLOANA 2
\node (v)   at (1, 2)  {$\mathrm{v)}\ \mathcal{L}_{X^c}d_{\mathcal{T}}F=0$};
\node (vi)  at (1, 1)  {$\mathrm{vi)}\ \mathcal{L}_{X^c}d_{\mathcal{T}}E=0$};
\node (vii) at (1, 0)  {$\mathrm{vii)}\ i_{X^c}\omega_E=-df_1$};

% COLOANA 3
\node (ix)   at (2.2, 2)   {$\mathrm{ix)}\ \mathcal{L}_{X^c}G=0$};
\node (viii) at (2.2, 1)   {$\mathrm{viii)}\ \mathcal{L}_{X^c}\omega_E=0$};
\node (xi)   at (1.7, -0.5) {$\mathrm{xi)}\ [X^c, J_0]=0$};
\node (xii)  at (2.7, -0.5) {$\mathrm{xii)}\ \mathcal{L}_{X^c}G_1=0$};
\node (x)    at (2.2, -1.5) {$\mathrm{x)}\ [X^c, S/F]=0$};

% CONEXIUNI
\draw[eq] (ii) -- (i); \draw[eq] (i) -- (iii); \draw[eq] (iii) -- (iv);
\draw[eq] (i) -- (v); \draw[eq] (i) -- (vi); \draw[eq] (vi) -- (vii);
\draw[eq] (vi) -- (viii); \draw[eq] (ix) -- (viii);

% TRIUNGHIUL LOGIC
\draw[imp] (viii) -- (xi);
\draw[imp] (xii) -- (viii);
\draw[imp] (xi) -- (xii);

% PERIFERICA
\draw[eq] (i.west) -- ++(-0.4,0) |- (x.west);

\end{tikzpicture}
%\caption{Logical flowchart of Theorem 3.2.}
\end{figure}

\emph{Equivalence of $i)$, $ii)$, $iii)$ and $iv)$ (scalar conditions):}
The first equivalence is immediate: $X$ is a Killing vector field if and only if the flow of its complete lift $Fl_t^{X^c}$ preserves the hyper-surfaces $F=\text{constant}$, meaning that $X^c$ is tangent to $SM$.

Contracting the Euler-Lagrange equations \eqref{lsdje} with the complete lift $X^c$ gives:
\begin{eqnarray*}
X^c(E) = i_{X^c}dE = i_{X^c} \mathcal{L}_Sd_\mathcal{T}E \stackrel{\eqref{ildt}}{=}  \mathcal{L}_Si_{X^c} d_\mathcal{T}E  = S(X^v(E)),  
\end{eqnarray*}
proving $i)\Longleftrightarrow iii)$.

We prove the equivalence $iii) \Longleftrightarrow iv)$. First, we recall that for the geodesic spray $S$, any projectively related spray is given by $\widetilde{S}=S-2P\mathcal{C}$, with $P\in C^{\infty}(T_0M)$ an arbitrary $1$-homogeneous function \cite[Chapter 12]{S2001}. Since $X^v$ is $(-1)$-homogeneous and $F$ is $1$-homogeneous, it follows that $f_0$ is a $0$-homogeneous function, meaning $\mathcal{C}(f_0)=0$. Thus, $\widetilde{S}(f_0)=S(f_0)$. Now, the equivalence comes from  the fact that $f_1=Ff_0$, $S(F)=0$, and therefore $S(f_1)=FS(f_0)$.

\emph{Equivalence with $v)$, $vi)$ and $vii)$ ($1$-form conditions):}
For $i) \Longrightarrow v)$ we invoke the commutation rule \eqref{lxcdt}. Thus, $X^c(F)=0$ implies $0=d_\mathcal{T}\mathcal{L}_{X^c}F=\mathcal{L}_{X^c}d_\mathcal{T}F$.

Conversely, to show $v) \Longrightarrow i)$, we use the commutation rule \eqref{ildt} for the function $F$, and $i_Sd_\mathcal{T}F=dF(\mathcal{T}S)=dF(\mathcal{C})=\mathcal{C}(F)=F$, which yields: 
\begin{eqnarray*}
0=i_S\mathcal{L}_{X^c}d_\mathcal{T}F=\mathcal{L}_{X^c}i_Sd_\mathcal{T}F=X^c(F).
\end{eqnarray*} 
 
Since conditions $i)$ and $iii)$ are equivalent, it suffices to show that either one of these two is equivalent to condition $vi)$. Given $E=F^2/2$, the direct implication follows by expanding the derivative: $\mathcal{L}_{X^c}d_\mathcal{T}E=X^c(F)d_\mathcal{T}F+F\mathcal{L}_{X^c}d_\mathcal{T}F=0$.

Assume now that condition $vi)$ is satisfied. Using again the commutation formula \eqref{ildt} for the energy function $E$, we have:
\begin{eqnarray*}
i_S\mathcal{L}_{X^c}d_\mathcal{T}E=\mathcal{L}_{X^c}i_Sd_\mathcal{T}E=2X^c(E), 
\end{eqnarray*}
since $i_Sd_\mathcal{T}E=dE(\mathcal{T}S)=dE(\mathcal{C})=\mathcal{C}(E)=2E$. Therefore, $\mathcal{L}_{X^c}d_\mathcal{T}E=0$ implies $X^c(E)=0$, which is equivalent to $X^c(F)=0$, recovering $i)$.  

For $vi \Longrightarrow vii)$, Cartan's formula allows us to write condition $vi)$ as follows: 
\begin{eqnarray} \label{ixcddje}
\mathcal{L}_{X^c}d_{\mathcal{T}}E=i_{X^c}dd_\mathcal{T}E+di_{X^c}d_\mathcal{T}E=0.
\end{eqnarray}
This shows that $X^c$ is a Hamiltonian vector field with corresponding $1$-homogeneous Hamiltonian $f_1=i_{X^c}d_\mathcal{T}E=dE(\mathcal{T}X^c)=X^v(E)$. Conversely, if $vii)$ holds for some (necessarily $1$-homogeneous) function $f_1\in C^{\infty}(T_0M)$, contracting it with the Liouville vector field we obtain:
\begin{eqnarray*}
f_1=\mathcal{C}(f_1)=i_{\mathcal{C}}df_1=-i_{\mathcal{C}}i_{X^c}dd_\mathcal{T}E= i_{X^c} i_{\mathcal{C}} dd_\mathcal{T}E = i_{X^c} \mathcal{L}_{\mathcal{C}}d_\mathcal{T}E =  i_{X^c} d_\mathcal{T}E =X^v(E).  
\end{eqnarray*}
For this function $f_1$, condition $vii)$ is equivalent to \eqref{ixcddje}, which is equivalent to the condition $vi)$.
 
\emph{Equivalence with $viii)$ and $ix)$ ($(0,2)$-type conditions):}  
Since the Lie derivative $\mathcal{L}_{X^c}$ commutes with the exterior derivative $d$, $vi) \Longrightarrow viii)$ is trivial. In order to prove the converse, we exploit the $0$-homogeneity of $X^c$, which implies the following commutation rule:
\begin{eqnarray*}
[i_{\mathcal{C}}, \mathcal{L}_{X^c}]=i_{[\mathcal{C}, X^c]}=0.
\end{eqnarray*} 
Contracting $viii)$ with the Liouville vector field $\mathcal{C}$ and using the above commutation rule, we obtain condition $vi)$:
\begin{eqnarray*}
0=i_{\mathcal{C}} \mathcal{L}_{X^c}dd_\mathcal{T}E = \mathcal{L}_{X^c} i_{\mathcal{C}} dd_\mathcal{T}E = \mathcal{L}_{X^c} \mathcal{L}_{\mathcal{C}} d_\mathcal{T}E - \mathcal{L}_{X^c} d i_{\mathcal{C}} d_\mathcal{T}E = \mathcal{L}_{X^c} d_\mathcal{T}E. 
\end{eqnarray*}
Here we used that $d_\mathcal{T}E$ is $1$-homogeneous ($\mathcal{L}_{\mathcal{C}} d_\mathcal{T}E=d_\mathcal{T}E$) and $i_{\mathcal{C}} d_\mathcal{T}E=0$.

We prove now the equivalence of $viii)$ and $ix)$. For one implication, if $X^c$ is a Killing vector field for the Sasaki metric $G$, using $\mathcal{L}_{X^c}\mathcal{T}=[X^c, \mathcal{T}]=0$ and the second formula \eqref{acs}, which gives the $2$-form $\omega_E$ in terms of the Sasaki metric $G$, we obtain $\mathcal{L}_{X^c}\omega_E=0$, which is condition $viii)$. 

For the other implication, we assume that $X^c$ preserves the symplectic form $\omega_E$. In order to prove that $X^c$ is also a Killing vector field for the Sasaki metric $G$, in view of formula \eqref{gomega}, it suffices to show $\mathcal{L}_{X^c}J=[X^c, J]=0$. 

First, we will prove that infinitesimal isometries take geodesics into geodesics, meaning $[X^c, S]=0$. Using the commutation rule of $i_S$ and $\mathcal{L}_{X^c}$ we obtain:
\begin{eqnarray*}
i_S\mathcal{L}_{X^c}\omega_E= \mathcal{L}_{X^c} i_S\omega_E + i_{[S,X^c]}\omega_E = -dX^c(E) + i_{[S,X^c]}\omega_E. \label{sxc}
\end{eqnarray*}
Using the assumption that $X$ is a Killing vector field, then $\mathcal{L}_{X^c}\omega_E=0$, $X^c(E)=0$ and since $\omega_E$ has maximal rank, we obtain that $[S,X^c]=0$.

We prove now that $[X^c, \Gamma]=0$. This follows from $[S, X^c]=0$, $[\mathcal{T}, X^c]=0$ and the Jacobi identity:
\begin{eqnarray*}
[X^c, [S, \mathcal{T}]]+ [\mathcal{T}, [X^c, S]] + [S, [\mathcal{T}, X^c]]=0.
\end{eqnarray*}
Using the definition of the horizontal projector $h=(\operatorname{Id}-[S,\mathcal{T}])/2$ we also have $[X^c, h]=0$ and in view of formula \eqref{acs} we obtain $[X^c, J]=0$. This formula and first formula \eqref{gomega} assures that $\mathcal{L}_{X^c}G=0$, hence $X^c$ is a Killing vector field for the Sasaki metric $G$.

\emph{Equivalence of $x)$ through $xii)$ (homogenized conditions)}: 
For the equivalence $i) \Longleftrightarrow x)$, expanding the Lie bracket yields:
\begin{eqnarray}
[X^c, S/F]=-\dfrac{X^c(F)}{F^2}S+\dfrac{1}{F}[X^c, S]. \label{xcsf}
\end{eqnarray}
The geodesic spray $S$ is a horizontal vector field while, according to formula \eqref{tsxc}, $[X^c, S]$ is a vertical vector field. Therefore, from \eqref{xcsf}, we have that $[X^c, S/F]=0$ if and only if $X^c(F)=0$ and $[X^c, S]=0$. While discussing condition $ix)$ above, we have proved that $X^c(F)=0$ implies $[X^c, S]=0$. This completes the proof that the condition $x)$ characterises Killing vector fields.

For a Killing vector field $X$, its complete lift preserves all the geometric structures that define the homogenized almost complex structure $J_0$ in formula \eqref{acs0}. Therefore, $\mathcal{L}_{X^c}J_0=0$, which is condition $xi)$. Assuming $[X^c, J_0]=0$ and using formula \eqref{acs0} we have that $J_0(S/F)=-\mathcal{C}$. Therefore, using also \eqref{xcsf}, we have
\begin{eqnarray*}
& 0 & =\left[X^c, J_0\right]\left(\frac{S}{F}\right)=[X^c, -\mathcal{C}]-J_0\left[X^c,\frac{S}{F}\right] \\ & = & -J_0\left( -\dfrac{X^c(F)}{F^2}S+\dfrac{1}{F}[X^c, S] \right) = -\dfrac{X^c(F)}{F} \mathcal{C} - J_0\left(\dfrac{1}{F}[X^c, S]\right),
\end{eqnarray*}
which means that $X^c(F)=0$ and $[X^c, S]=0$, hence $X$ is a Killing vector field.

We have seen that a Killing vector field satisfies $vii)$ and $xi)$ and, in view of the first formula in \eqref{g1omega}, it satisfies $xii)$. If condition $xii)$ is satisfied, using second formula \eqref{g1omega} and the fact that $\mathcal{L}_{X^c}\mathcal{T}=0$, it follows that condition $viii)$ holds, which means that $X$ is a Killing vector field. 
\end{proof}

\subsection{Comments on Killing characterisations}

In the particular case where the Finslerian norm is induced by a Riemannian metric, $F(x,y)=\sqrt{g_{ij}(x)y^iy^j}$, some of the characterisations provided in Theorem \ref{thmKglobal} had been previously obtained using alternative  techniques. Characterisation $viii)$ was established by Sasaki in \cite{S1958} using local coordinates. A global approach for characterisations $v)$, $vi)$ and $viii)$ is due to Albuquerque, \cite[Theorem 3.2]{A2019}. 

Conformal vector fields, which generalise the concept of Killing vector fields within the Finslerian setting, were studied by Szilasi and T\'oth in \cite{ST2011}. Their results can be used to recover characterisations $vi)$ and $viii)$.  

More recently, Crampin has shown in \cite{C2023} that Finslerian Killing vector fields preserve many of the canonical geometric structures, including geodesic spray, the nonlinear connection, the symplectic structure, the Sasaki metric, and the homogenized almost complex structure (hence the almost-K\"ahler structure), yielding one implication for $viii)$, $xi)$ and $xii)$. In \cite{C2023}, Crampin also proves that some other geometric structures are preserved by Killing vector fields: Cartan tensor, distortion, volume form and $S$-function. In this work we focus on those conditions that characterise Killing vector fields.

In \cite[Theorem 16]{AM2025}, Abbassi and Mekrami use some auxiliary extra conditions to obtain a characterisation for Killing vector fields of the Sasaki metric induced by a Finsler structure.

\section{Killing equations}

Conditions characterising vector fields whose flows preserve a Riemannian metric were first obtained by W. Killing in \cite{K1890}, where what we now call Killing vector fields were characterised in terms of Killing equations.

In this section, we obtain various formulations of the Killing equations in the Finslerian context, providing local expressions for the characterisations established in Theorem \ref{thmKglobal}. 

\subsection{Local expression of the geometric setting.} We now recall the local expressions and some properties of the geometric structures involved. 

For a vector field $X=X^i\frac{\partial}{\partial x^i} \in \mathfrak{X}(M)$, its complete lift is given by:
\begin{eqnarray}
X^c=X^i\frac{\partial}{\partial x^i}+ \frac{\partial X^i}{\partial x^j}y^j\frac{\partial}{\partial y^i} = X^i\frac{\partial}{\partial x^i}+ 
S(X^i)\frac{\partial}{\partial y^i} = X^i\frac{\delta}{\delta x^i}+ 
\nabla(X^i)\frac{\partial}{\partial y^i}. \label{xc}
\end{eqnarray}
For the second expression in \eqref{xc} we can use any spray $S$. In the third expression of \eqref{xc}, $\nabla X^i=S(X^i)+N^i_j X^j$ denotes the dynamical covariant derivative of the vector components $X^i$, \cite{BCD2011}. 

A fundamental property of a Finsler space is the fact that the metric tensor $g_{ij}$ and the angular metric $h_{ij}$ are covariantly constant with respect to the dynamical covariant derivative, \cite[Theorem 1.1, Theorem 1.2]{BC2024}:
\begin{eqnarray}
\nabla g_{ij}=0, \quad \nabla h_{ij}=0, \label{nablag}
\end{eqnarray}
where $\nabla g_{ij}=S(g_{ij})-N_i^kg_{kj} - N_j^kg_{ik}$ and a similar formula for $h_{ij}$.

Within the geometric setting induced by a Finsler metric, we consider the Berwald connection, which is a linear connection on $T_0M$, $\mathcal{D}: \mathfrak{X}(T_0M)\times \mathfrak{X}(T_0M) \to \mathfrak{X}(T_0M)$, preserving the horizontal and vertical distributions and has the following local expression:
\begin{eqnarray*}
\mathcal{D}_{\frac{\delta}{\delta x^i}}\frac{\delta}{\delta x^j}=F^k_{ij}\frac{\delta}{\delta x^k}, \quad \mathcal{D}_{\frac{\delta}{\delta x^i}}\frac{\partial}{\partial y^j}=F^k_{ij}\frac{\partial}{\partial y^k}, \quad 
\mathcal{D}_{\frac{\partial}{\partial y^i}}\frac{\delta}{\delta x^j}=0, \quad \mathcal{D}_{\frac{\partial}{\partial y^i}}\frac{\partial}{\partial y^j}=0. \label{Berwald_con}
\end{eqnarray*}
The local coefficients of the Berwald connection, locally defined on $T_0M$, are related to the local coefficients of the nonlinear connection as follows:
\begin{eqnarray}
F^k_{ij}=\frac{\partial N^k_i}{\partial y^j}=\frac{\partial^2 G^k}{\partial y^i\partial y^j}, \quad N^k_i=F^k_{ij}y^j. \label{Berwald_coef}
\end{eqnarray}
The second formula \eqref{Berwald_coef} is a consequence of the homogeneity. Local coefficients $N^k_i(x,y)$ are $1$-homogeneous in the fibre coordinates and by Euler's theorem we have:
\begin{eqnarray*}
N^k_i=\mathcal{C}(N^k_i)=y^j\frac{\partial N^k_i}{\partial y^j}=y^jF^k_{ij}.
\end{eqnarray*}
We denote the horizontal covariant derivative, with respect to the Berwald connection of some tensor $T$ as follows: $\mathcal{D}_{\frac{\delta}{\delta x^i}}T=T_{|i}.$ Hence, the dynamical covariant derivative is the Berwald covariant derivative in the direction of the spray $S$: 
\begin{eqnarray}
\nabla T=\mathcal{D}_ST=T_{|i}y^i. \label{dnabla}
\end{eqnarray}
For a Finsler metric $F$, the $(0,3)$-type tensor:
\begin{eqnarray*}
C_{ijk}=\frac{1}{2}\frac{\partial g_{ij}}{\partial y^k}=\frac{1}{4}\frac{\partial^3 F^2}{\partial y^i \partial y^j \partial y^k},
\end{eqnarray*}
denotes the \emph{Cartan torsion}. It is a totally symmetric tensor, vanishes if and only if the Finslerian norm is induced by a Riemannian metric, and has the property $C_{ijk}y^i=0$. 

As already mentioned, the metric tensor $g_{ij}$ is covariantly constant with respect to the dynamical covariant derivative. However, it is not covariantly constant with respect to the Berwald connection, and this can be justified as follows. We start from first formula \eqref{nablag}, which can be written using \eqref{dnabla} as $g_{ij|k}y^k=0$. If we take the derivative of this last formula with respect to the fibre coordinates, we obtain:
\begin{eqnarray}
g_{ij|k}+2\nabla C_{ijk}=0. \label{gijk}
\end{eqnarray}

\subsection{Main Characterisation Theorem: the local approach.} The next theorem provides local reformulations of the first nine characterising conditions of Theorem \ref{thmKglobal}, leading to equivalent expressions for the Killing equations. The last three characterising conditions of Theorem \ref{thmKglobal} are equivalent to $X^c(F)=0$ and their local expressions do not provide new Killing equations. 
\begin{theorem} \label{thmKlocal}
Consider $(M,F)$ a Finsler manifold and $X\in \mathfrak{X}(M)$. The following Killing equations are equivalent:

Consider $(M, F)$ a Finsler manifold and $X \in \mathfrak{X}(M)$. The following local expressions of the Killing equations are equivalent:

\begin{enumerate}
    \setlength{\itemsep}{12pt}
    
    \item[\textnormal{i)}] $X$ is a Killing vector field: 
    \[ X_{i|j}y^{i}y^{j} = 0. \]
    
    \item[\textnormal{ii)}] $X^{c}$ is tangent to the sphere bundle $SM$: 
    \[ g_{ij}y^{i}\nabla X^{j} = 0. \]
    
    \item[\textnormal{iii)}] The $1$-homogeneous function $f_{1} = g_{ij}X^{j}y^{i} = X_{i}y^{i} = X^{i}y_{i}$ is a first integral for the geodesic spray: 
    \[ \nabla(X_{i}y^{i}) = 0. \]
    
    \item[\textnormal{iv)}] The $0$-homogeneous function $f_{0} = \frac{1}{F}X_{i}y^{i} = \frac{1}{F}X^{i}y_{i}$ is a first integral for any projectively related spray $\tilde{S}$: 
    \[ \tilde{\nabla}\left(\frac{X_{i}y^{i}}{F}\right) = 0. \]
    
    \item[\textnormal{v)}] $X^{c}$ is a contact vector field: 
    \[ \nabla(h_{ij}X^{j}) + (y_{j}X^{j})_{|i} = 0. \]
    
    \item[\textnormal{vi)}] $X^{c}$ preserves the Poincar\'e-Cartan $1$-form $d_{\mathcal{T}}E$: 
    \[ (X_{i|j} + X_{j|i})y^{j} = 0. \]
    
    \item[\textnormal{vii)}] $X^{c}$ is a Hamiltonian vector field: 
    \[ X_{i} = \frac{\partial f_{1}}{\partial y^{i}}, \quad \nabla X_{i} = -\frac{\delta f_{1}}{\delta x^{i}}. \]
    
    \item[\textnormal{viii)}] $X^{c}$ is a symplectic vector field: 
    \[ X^{c}(g_{kl}) + g_{il}\frac{\partial X^{i}}{\partial x^{k}} + g_{ki}\frac{\partial X^{i}}{\partial x^{l}} = 0. \]
    
    \item[\textnormal{ix)}] $X^{c}$ is a Killing vector field for the Sasaki metric $G$: 
    \[ X_{i|j} + X_{j|i} + 2\nabla(C_{ijk}X^{k}) = 0. \]
\end{enumerate}
\end{theorem}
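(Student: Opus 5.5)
Since Theorem \ref{thmKglobal} already establishes that conditions i)--ix) are mutually equivalent, the plan is not to reprove any equivalence. Instead, for each item of Theorem \ref{thmKglobal} I will compute its local expression and show that it coincides with the corresponding Killing equation listed above. The workhorse is the third form of the complete lift in \eqref{xc}, namely $X^c=X^i\,\delta/\delta x^i+\nabla X^i\,\partial/\partial y^i$. This will be combined with the adapted coframe $\{dx^i,\delta y^i\}$, the identities \eqref{fgij}, \eqref{hij}, the metric compatibility \eqref{nablag}, $d_hF=0$ from \eqref{dhe}, and the relation $\nabla T=T_{|i}y^i$ from \eqref{dnabla}.

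\textbf{The scalar conditions.} Since $d_hF=0$, the horizontal part of $X^c$ does not act on $F$. Hence
\[
X^c(F)=\nabla X^j\,\partial F/\partial y^j=\tfrac1F\,y_j\nabla X^j ,
\]
which gives ii); it also gives i) directly. Next, $X^v(E)=X^jy_j=f_1$, and since $\nabla g_{ij}=0$ and $\nabla y^i=0$ (because $S(y^i)+N^i_jy^j=-2G^i+2G^i=0$), we get $\nabla(X_iy^i)=y_i\nabla X^i$. This is iii). Writing $\nabla(X_iy^i)=X_{i|j}y^iy^j$ via \eqref{dnabla} yields i). Condition iv) follows from $f_0=f_1/F$, $S(F)=0$, and the fact that $\widetilde S$ and $S$ agree on $0$-homogeneous functions; this is the argument already given in Theorem \ref{thmKglobal}.

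\textbf{The one-form conditions.} For vi) I will use Cartan's formula,
\[
\mathcal L_{X^c}d_\mathcal T E=i_{X^c}\omega_E+df_1 ,
\]
and expand both terms in $\{dx^i,\delta y^i\}$. By \eqref{oedelta}, $i_{X^c}\omega_E=g_{ij}\nabla X^i\,dx^j-X_i\,\delta y^i$. Also $df_1=\frac{\delta f_1}{\delta x^i}dx^i+\frac{\partial f_1}{\partial y^i}\delta y^i$, and $\partial f_1/\partial y^i=X_i+2C_{ijk}X^jy^k=X_i$. So the $\delta y$ components vanish identically, and vi), as well as vii), reduce to
\[
\nabla X_i+\frac{\delta f_1}{\delta x^i}=0 .
\]
This gives vii) immediately. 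To reach the form stated in vi), I will rewrite $\frac{\delta f_1}{\delta x^i}$ as $(X_jy^j)_{|i}=X_{j|i}y^j$, using that $y^j_{|i}=0$ for the Berwald connection. I will also rewrite $\nabla X_i=X_{i|j}y^j$. Condition v) is handled the same way with $d_\mathcal TF=\frac{y_i}{F}dx^i$. Splitting $\frac{1}{F}f_1$, using $h_{ij}=g_{ij}-y_iy_j/F^2$, and multiplying by $F$, the expression reorganises into $\nabla(h_{ij}X^j)+(y_jX^j)_{|i}$; the terms proportional to $y_i\,X^c(F)$ cancel against the $h$-correction.

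\textbf{The tensorial conditions.} For viii) I will work in natural coordinates, with $\omega_E=d(g_{ij}y^j)\wedge dx^i$, and use $[X^c,\partial/\partial y^k]=-\frac{\partial X^i}{\partial x^k}\partial/\partial y^i$. Computing
\[
(\mathcal L_{X^c}\omega_E)(\partial_{y^k},\partial_{x^l})
\]
then gives exactly the stated equation, and the remaining components follow from it by homogeneity, as in the global proof. For ix) I will compute $\mathcal L_{X^c}G$ on the adapted frame. The $hh$-block produces $\nabla$ of the Lie derivative of $g$, namely
\[
X^kg_{ij|k}+g_{kj}X^k_{|i}+g_{ik}X^k_{|j} ,
\]
and \eqref{gijk} converts $X^kg_{ij|k}$ into $-2X^k\nabla C_{ijk}$. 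The main obstacle is this last step: one must show that the $vv$- and mixed blocks give no new equation (they reduce to the $hh$-block via $[X^c,\Gamma]=0$), and keep track of the $\nabla C$ terms carefully, so that the final equation assembles as $X_{i|j}+X_{j|i}+2\nabla(C_{ijk}X^k)=0$.
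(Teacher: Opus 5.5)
Your treatment of i)--viii) is sound and follows the paper's route: match each local equation with the corresponding condition of Theorem \ref{thmKglobal}. For viii) your homogeneity remark is more careful than the paper. The $(v,v)$ block of $\mathcal{L}_{X^c}\omega_E$ vanishes identically, so $i_{\mathcal{C}}\mathcal{L}_{X^c}\omega_E=\mathcal{L}_{X^c}d_{\mathcal{T}}E$ is determined by the $(v,h)$ block alone. The gap is in ix), on two counts.

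\textbf{The $hh$-block is incomplete.} Because $X^c$ has the vertical component $\nabla X^k\,\partial/\partial y^k$, one has $X^c(g_{ij})=X^k\,\delta g_{ij}/\delta x^k+2C_{ijk}\nabla X^k$. Hence
\[
(\mathcal{L}_{X^c}G)(\delta/\delta x^i,\delta/\delta x^j)=X^kg_{ij|k}+2C_{ijk}\nabla X^k+g_{kj}X^k_{|i}+g_{ik}X^k_{|j}.
\]
Keep only the three terms you list and apply \eqref{gijk} to $X^kg_{ij|k}$ and to $g_{ik}X^k_{|j}=X_{i|j}+2X^k\nabla C_{ijk}$. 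You get $X_{i|j}+X_{j|i}+2X^k\nabla C_{ijk}$. It is precisely the omitted term $2C_{ijk}\nabla X^k$ that turns this into $2\nabla(C_{ijk}X^k)$.

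\textbf{The converse direction is circular.} You cannot get from "the $hh$-block vanishes" to $\mathcal{L}_{X^c}G=0$ through $[X^c,\Gamma]=0$. By the Jacobi identity and $[X^c,\mathcal{T}]=0$, one has $[X^c,\Gamma]=\mathcal{L}_{[X^c,S]}\mathcal{T}$. For the vertical, $1$-homogeneous field $[X^c,S]$, this vanishes only if $[X^c,S]=0$. In the global proof, $[X^c,S]=0$ is derived from the Killing property, so invoking it here assumes what you want to prove.

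The missing step is to extract the Killing property from ix) first. Contract ix) with $y^i$, using $C_{ijk}y^i=0$ and $\nabla y^i=0$, to obtain vi). Then Theorem \ref{thmKglobal} yields $\mathcal{L}_{X^c}G=0$, and no block-by-block analysis is needed. This is how the paper closes the loop, via its direct proof of vi) $\Leftrightarrow$ ix).

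Alternatively, note that $h[X^c,\delta/\delta x^i]=-(\partial X^k/\partial x^i)\,\delta/\delta x^k$ holds for every $X$. So the $hh$-block of $\mathcal{L}_{X^c}G$ coincides identically with the left-hand side of viii), and your homogeneity argument for viii) then finishes the job.
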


\begin{proof}
We prove the equivalence of the Killing equations according to the following diagram. We also show the equivalence of each Killing equation with the corresponding condition of Theorem \ref{thmKglobal}.

\begin{figure}[htbp]
\centering
\begin{tikzpicture}[
    >=Latex,
    every node/.style={font=\footnotesize},
    cond/.style={align=center, inner sep=5pt, minimum width=24mm}, 
    group/.style={font=\small\itshape, align=center},
    eq/.style={<->, thick},
    imp/.style={->, thick}
]

% ==================== COLOANA 1: SCALAR KILLING EQUATIONS ====================
\node[group] (T1) at (0,2.5) {Scalar Killing\\Equations};

\node[cond] (ii)  at (0,1.5)  {$\mathrm{ii)}\ g_{ij}y^i\nabla X^j = 0$};
\node[cond] (i)   at (0,0)    {$\mathrm{i)}\ X_{i|j}y^i y^j = 0$};
\node[cond] (iii) at (0,-1.5) {$\mathrm{iii)}\ \nabla(X_i y^i) = 0$};
\node[cond] (iv)  at (0,-3)   {$\mathrm{iv)}\ \tilde{\nabla}\left(\frac{X_i y^i}{F}\right) = 0$};

% Săgeți interne pe verticală clare
\draw[eq] (i) -- (ii);
\draw[eq] (i) -- (iii);
\draw[eq] (iii) -- (iv);

% ==================== COLOANA 2: 1-FORM KILLING EQUATIONS ====================
\node[group] (T2) at (5.0,2.5) {1-Form Killing\\Equations};

\node[cond] (v)   at (5.0,1.5)  {$\mathrm{v)}\ \nabla(h_{ij}X^j) + (y_j X^j)_{|i} = 0$};
\node[cond] (vi)  at (5.0,0)    {$\mathrm{vi)}\ (X_{i|j} + X_{j|i})y^j = 0$};
\node[cond] (vii) at (5.0,-1.5) {$\mathrm{vii)}\ X_i = \frac{\partial f_1}{\partial y^i},\ \nabla X_i = -\frac{\delta f_1}{\delta x^i}$};

% Săgeți interne pe verticală
\draw[eq] (vi) -- (v);
\draw[eq] (vi) -- (vii);

% ==================== COLOANA 3: (0,2)-TENSOR KILLING EQUATIONS ====================
\node[group] (T3) at (10.2,2.5) {(0,2)-Type Tensor\\Killing Equations};

\node[cond] (ix)   at (10.2,1.5)  {$\mathrm{ix)}\ X_{i|j} + X_{j|i} + 2\nabla(C_{ijk}X^k) = 0$};
\node[cond] (viii) at (10.2,0)    {$\mathrm{viii)}\ X^c(g_{kl}) + g_{il}\frac{\partial X^i}{\partial x^k} + g_{ki}\frac{\partial X^i}{\partial x^l} = 0$};

% Săgeată internă verticală
\draw[eq] (viii) -- (ix);

% ==================== LANȚUL LOGIC PRINCIPAL DINTRE COLOANE ====================

% Axa centrală a echivalențelor directe: i <=> vi <=> viii
\draw[eq] (i) -- (vi);
\draw[eq] (vi) -- (viii);

% Conexiunea oblică  i) <=> v)
\draw[eq] (i) -- (v);

% Închiderea buclei de la ix) înapoi în vi) prin exterior
%\draw[imp] (ix) -- (13.5,1.5) -- (13.5,-0.8) -- (6.8,-0.8) -- (vi);

\end{tikzpicture}
%\caption{Logical flow of the local characterisations for Killing equations grouped by their geometric types.}
\label{fig:local_proof_structure_clean}
\end{figure}

\emph{Equivalence of $i)$, $ii)$, $iii)$ and $iv)$ (scalar Killing equations)}:
For the first four Killing equations we have that $X$ is a Killing vector field if and only if $X^c(F)=0$, which is equivalent to:
\begin{eqnarray*}
\nonumber 0& = & X^c(F^2) \stackrel{\eqref{xc}}{=}\left(X^i\frac{\delta}{\delta x^i}+\nabla X^i \frac{\partial}{\partial y^i} \right) (F^2) \stackrel{\eqref{dhe}}{ = } \nabla X^i \frac{\partial F^2}{\partial y^i} \\ &  \stackrel{\eqref{fgij}}{=} & 2\nabla X^ig_{ij}y^j \stackrel{\eqref{nablag}}{=} 2y^i\nabla X_i \stackrel{\eqref{dnabla}}{=}2y^i\mathcal{D}_SX_i= 2y^i X_{i|j}y^j. \label{k12}
\end{eqnarray*}
We justify now that the second Killing equation is equivalent to the second condition of Theorem \ref{thmKglobal}. Indeed, $X^c\in \mathfrak{X}(SM)$ if and only if $G(X^c, \mathcal{C})=0$, which is equivalent to $0=g_{ij}\nabla X^i y^j=\nabla(X_iy^i)=\nabla(X_iy^i/F)$.

While, the four conditions $i)$, $ii)$, $iii)$ and $iv)$ of Theorem \ref{thmKglobal} have different geometric interpretations, their local expressions (the four scalar Killing equations) are very similar:
\begin{eqnarray*}
y^iX_{i|j}y^j=y^ig_{ij}\nabla X^j=\nabla(X_iy^i)=\nabla\left(\frac{1}{F}X_iy^i\right)=0. \label{kyy}
\end{eqnarray*}
We emphasize here that, in the Finslerian context, the components of the $(0,2)$-type tensor, $X_{i|j}$ are $0$-homogeneous and depend on the fibre coordinates, while in the Riemannian context these components do not depend on the fibre coordinates.

\emph{Equivalence with $v)$, $vi)$ and $vii)$ ($1$-form Killing equations)}: Locally, the equivalent formulation \eqref{dhe} of the Euler-Lagrange equations can be written as follows:
\begin{eqnarray*}
\frac{\delta E}{\delta x^i}=E_{|i}=0 \Longleftrightarrow \frac{\delta F}{\delta x^i}=F_{|i}=0.
\end{eqnarray*}
In these formulae, taking the derivative with respect to the fibre coordinates $y^j$ yields:
\begin{eqnarray*}
0=\frac{\partial}{\partial y^j}\left(\frac{\delta E}{\delta x^i}\right) = \frac{\partial}{\partial y^j}\left(E_{|i}\right)=\left(\frac{\partial E}{\partial y^j}\right)_{|i}, \ 0=\frac{\partial}{\partial y^j}\left(\frac{\delta F}{\delta x^i}\right) = \frac{\partial}{\partial y^j}\left(F_{|i}\right)=\left(\frac{\partial F}{\partial y^j}\right)_{|i}.
\end{eqnarray*} 
To expand the Lie derivative of the contact $1$-form $d_{\mathcal{T}}F$, we use the local expression of the complete lift $X^c$, the expression \eqref{hij} of the angular metric, and the corresponding formula above for the norm $F$. Hence: 
\begin{eqnarray*}
\nonumber \mathcal{L}_{X^c}d_{\mathcal{T}}F & = & \mathcal{L}_{X^c} \left(\frac{\partial F}{\partial y^i}dx^i \right) =X^c\left(\frac{\partial F}{\partial y^i} \right)dx^i+ \frac{\partial F}{\partial y^j} \frac{\partial X^j}{\partial x^i} dx^i \\ & = & \left(\nabla X^j \frac{\partial^2 F}{\partial y^j \partial y^i} + \frac{\partial F}{\partial y^j} X^j_{|i} \right) dx^i = \frac{1}{F} \left(h_{ij} \nabla X^j + y_j X^j_{|i}\right) dx^i, \label{k3}
\end{eqnarray*}
and hence the Killing equations $v)$ are equivalent to the corresponding condition of Theorem \ref{thmKglobal}.

Using very similar computations, if we expand the Lie derivative of the Poincar\'e-Cartan $1$-form $d_{\mathcal{T}}E$, we obtain the equivalence of the Killing equations $vi)$ and the corresponding condition of Theorem \ref{thmKglobal} :
\begin{eqnarray*}
\mathcal{L}_{X^c}d_{\mathcal{T}}E = \left( \nabla X^j \frac{\partial^2 E}{\partial y^j \partial y^i} + \frac{\partial E}{\partial y^j} X^j_{|i} \right) dx^i = \left( g_{ij} \nabla X^j + y_j X^j_{|i}\right) dx^i = \left( X_{j|i} + X_{i|j}\right) y^j dx^i. \label{k4}
\end{eqnarray*}
Alternatively, we can prove the equivalence of Killing equations $v)$ and $vi)$ using formula \eqref{hij} that connects the metric tensor and the angular metric.

We can also prove directly the equivalence of the Killing equations $i)$ and $vi)$. We emphasize that, except for the Riemannian case, the Killing equation $i)$ is not quadratic in the fibre coordinates. If we contract $vi)$ with $y^i$ we get $i)$. For the other implication, we take the derivative of $i)$ with respect to the fibre coordinates. Therefore, we have:
\begin{eqnarray*}
0=\frac{\partial}{\partial y^k}\left(X_{i|j}y^iy^j\right) = \left(X_{k|j}+X_{j|k}\right)y^j + \frac{\partial}{\partial y^k}\left(X_{i|j}\right)y^iy^j = \left(X_{k|j}+X_{j|k}\right)y^j.
\end{eqnarray*}
For the last equality above, we use $C_{ilk}y^i=0$ and $g_{il|k}y^i=-2\nabla C_{ilk}y^i=0$ to obtain: 
\begin{eqnarray}
\frac{\partial}{\partial y^k}\left(X_{i|j}\right)y^i = \frac{\partial}{\partial y^k}\left(g_{il}X^l_{|j}+g_{il|j}X^l\right)y^i = C_{ilk}y^iX^l_{|j}-2\nabla C_{ilk}y^iX^l=0. \label{partialxiji}
\end{eqnarray} 

Using the local expression \eqref{oedelta} of the symplectic $2$-form $\omega_E$, $X^c$ is a Hamiltonian vector field if and only if there exists a $1$-homogeneous function $f_1$ such that:
\begin{eqnarray}
X^c=g^{ij}\frac{\partial f_1}{\partial y^j}\frac{\delta}{\delta x^i} - g^{ij}\frac{\delta f_1}{\delta x^j}\frac{\partial}{\partial y^i}. \label{xch}
\end{eqnarray} 
If we compare \eqref{xch} with the expression of $X^c$ given by last formula \eqref{xc}, we obtain Killing equations $vii)$. 

Of the two sets of Killing equations $vii)$, the first one, $X_i=\frac{\partial f_1}{\partial y^i}$, is equivalent to $f_1=X_iy^i$. The direct implication comes from the contraction with $y^i$ and the $1$-homogeneity of $f_1$. The second implication follows by taking the derivatives with respect to the fibre coordinates. Thus, the second set of Killing equations $vii)$ can be written as follows:
\begin{eqnarray*}
0=\nabla X_i + \frac{\delta f_1}{\delta x^i} = \nabla X_i+(f_1)_{|i}= X_{i|j}y^j+ (X_jy^j)_{|i} = \left( X_{i|j}+X_{j|i}\right)y^j,
\end{eqnarray*} 
which shows the equivalence with the Killing equations $vi)$.

\emph{Equivalence with $viii)$ and $ix)$ ($(0,2)$-type form Killing equations)}: Using the local expression \eqref{oedelta} of the symplectic $2$-form $\omega_E$, we have:
\begin{eqnarray*}
\left(\mathcal{L}_{X^c} g_{ij} \delta y^i \wedge dx^j\right)\left(\frac{\partial}{\partial y^k}, \frac{\delta}{\delta x^l}\right) = X^c(g_{kl})+g_{il}\frac{\partial X^i}{\partial x^k} + g_{ki}\frac{\partial X^i}{\partial x^l},
\end{eqnarray*}
which shows the equivalence of Killing equations $viii)$ and the corresponding condition of Theorem \ref{thmKglobal}.

If we express the Lie derivative $\mathcal{L}_{X^c}G$ in terms of the covariant derivative $\mathcal{D}_{X^c}G$ with respect to the Berwald connection, we have:
\begin{eqnarray*}
\left(\mathcal{L}_{X^c}G\right)\left(\frac{\delta}{\delta x^i}, \frac{\delta}{\delta x^j}\right)& = & \left(\mathcal{D}_{X^c}G \right)\left(\frac{\delta}{\delta x^i}, \frac{\delta}{\delta x^j}\right) + G\left(\mathcal{D}_{\frac{\delta}{\delta x^i}}X^c, \frac{\delta}{\delta x^j} \right) + G\left(\frac{\delta}{\delta x^i}, \mathcal{D}_{\frac{\delta}{\delta x^j}}X^c \right) \\ &\stackrel{\eqref{xc}}{=}& 
X^kg_{ij|k} + 2\nabla X^k C_{ijk} + g_{lj}X^l_{|i} + g_{il}X^l_{|j} \stackrel{\eqref{gijk}}{=} X_{i|j} + X_{j|i} + 2\nabla\left(X^kC_{ijk}\right). 
\end{eqnarray*}
This shows the equivalence of the Killing equation $ix)$ and the condition $ix)$ of Theorem \ref{thmKglobal}.

Similarly, as we proved the equivalence of Killing equations $i)$ and $vi)$, we can prove now the equivalence of Killing equations $vi)$ and $ix)$. One implication is very simple, if we contract $ix)$ with $y^i$ and use $C_{ijk}y^i=0$, we obtain $vi)$. For the other implication, we take the derivative of $vi)$ with respect to the fibre coordinates:
\begin{eqnarray*}
\frac{\partial}{\partial y^k}\left(X_{i|j}y^j+X_{j|i}y^j\right) \stackrel{\eqref{partialxiji}}{=} X_{i|k} + X_{k|i} + \frac{\partial}{\partial y^k}\left(X_{i|j}\right) y^j = X_{i|k} + X_{k|i} + 2\nabla(C_{ilk}X^l).
\end{eqnarray*} 
For the last equality above, we used the following computations:
\begin{eqnarray*}
\frac{\partial}{\partial y^k}\left(X_{i|j}\right) y^j & = & \frac{\partial}{\partial y^k}\left(g_{il}X^l_{|j} + g_{il|j}X^l\right) y^j = 2C_{ilk}X^l_{|j}y^j - g_{il|k}X^l \\ & = & 2C_{ilk}\nabla X^l + 2X^l\nabla C_{ilk} = 2\nabla(C_{ilk}X^l).
\end{eqnarray*}

One can directly prove the equivalence of the Killing equations $viii)$ and $ix)$ by using last formula \eqref{xc}, formula \eqref{gijk}, and rewriting the partial derivatives $\partial X^i/\partial x^j$ in terms of the covariant derivatives $X^i_{|j}$.
\end{proof}

\subsection{Comments on Killing equations.} In the Riemannian context, for a vector field $X$, the components of the $(0,2)$-type tensor $X_{i|j}$ are well defined on the base manifold. Therefore, it is very easy to see the equivalence of the three types of Killing equations:  
\begin{eqnarray*}
X_{i|j}(x)y^iy^j=0 \Longleftrightarrow (X_{i|j}(x)+X_{j|i}(x))y^j=0 \Longleftrightarrow X_{i|j}(x)+X_{j|i}(x) =0.
\end{eqnarray*}
In the Finslerian case, the components of the $(0,2)$-type tensor $X_{i|j}$ are well defined on the slit tangent bundle and are $0$-homogeneous. Apart from this, the first two types of Killing equations (scalar and $1$-form) are formally the same as in the Riemannian case. 
However, the tensorial form of the Killing equations is different because of the presence of Cartan torsion, which is a purely non-Riemannian object:
\begin{eqnarray*}
X_{i|j}(x,y)y^iy^j  =  0 & \Longleftrightarrow & (X_{i|j}(x,y)+X_{j|i}(x,y))y^j=0 \\ &\Longleftrightarrow & X_{i|j}(x,y)+X_{j|i}(x,y)+2\nabla(C_{ijk}(x,y)X^k(x)) =0.
\end{eqnarray*}
In the Riemannian context, the classical Killing equations are represented by a $(0,2)$-type tensorial equation on the base manifold:
\begin{eqnarray*}
X(g_{kl}) + g_{il}\frac{\partial X^i}{\partial x^k} + g_{ki}\frac{\partial X^i}{\partial x^l} = 0.
\end{eqnarray*} 
In the Finslerian context, the components of the metric tensor $g_{ij}$ depend on the fibre coordinates, and therefore the corresponding $(0,2)$-type tensorial Killing equations are defined on the slit tangent bundle $T_0M$:  
\begin{eqnarray*}
X^c(g_{kl}) + g_{il}\frac{\partial X^i}{\partial x^k} + g_{ki}\frac{\partial X^i}{\partial x^l} = 0.
\end{eqnarray*}
In both the Riemannian and the Finslerian settings, the tensorial Killing equations are $0$-homogeneous. 

In Theorem \ref{thmKlocal}, we derive Killing equations $viii)$ from the invariance of the symplectic structure of a Finsler manifold rather than from the invariance of the Sasaki metric, since the former approach is more natural and technically simpler. In our approach, no Killing equation is regarded as privileged; each has its own geometric meaning.

%\subsection*{Acknowledgements} We express our thanks to ....

\subsection*{Disclaimer} No AI tools were used in the preparation of this manuscript, except for English language editing.


\begin{thebibliography}{99} 

\bibitem{AK2014} B. Aradi, D. Cs. Kert\'esz: \emph{Isometries, submetries and distance coordinates on Finsler
manifolds}, Acta Math. Hungar., \textbf{143} (2014), 337--350.

\bibitem{AM2025} M. Abbassi, A. Mekrami: \emph{On natural symmetries on slit tangent bundles of Finsler manifolds}, Ann. Univ. Ferrara Sez. VII Sci. Mat., \textbf{71}(2)(2025), Paper No. 28, 43 pp.

\bibitem{A2019} R. Albuquerque: \emph{Notes on the Sasaki metric}, Expo. Math., \textbf{37}(2)(2019), 207--224.

\bibitem{BC2024} I. Bucataru, O. Constantinescu: \emph{Finsler metrizabilities and geodesic invariance},
Int. J. Math., \textbf{35} (6) (2024), Article ID 2450016, 11 p.

\bibitem{BCD2011} I. Bucataru, O.A. Constantinescu, M.F. Dahl, \emph{A geometric setting for systems of ordinary differential equations},
  Int. J. Geom. Methods Mod. Phys., \textbf{8} (6) (2011), 1291--1327.

\bibitem{C2008} M. Crampin: \emph{K\"ahler and para-K\"ahler structures associated with Finsler spaces of non-zero constant flag curvature}, Houston J. Math., \textbf{34}(1)(2008), 99--114.

\bibitem{C2011} M.Crampin: \emph{Some remarks on the Finslerian version of Hilbert's fourth problem}, Houston J. Math., \textbf{37}(2) (2001),  369--391.

\bibitem{C2023} M. Crampin: \emph{On infinitesimal isometries of a Minkowskian space}, DOI: 10.13140/RG.2.2.31176.29449, 2023.

%\bibitem{C2026} M.Crampin: \emph{The Killing algebra of a Minkowskian space}, DOI: 10.13140/RG.2.2.26375.56480, 2026.

\bibitem{DH2002} S. Deng, Z. Hou: \emph{The group of isometries of a Finsler space}, Pacific J. Math., \textbf{207}(1) (2002), 149--155.

\bibitem{FN1956} A. Frölicher, A. Nijenhuis: \emph{Theory of vector-valued differential forms.} I. Derivations of the graded ring of differential forms, Indag. Math., \textbf{18} (1956), 338--359.

\bibitem{G1972} J. Grifone: \emph{Structure presque-tangente et
    connexions I}, Ann. Inst. Henri Poincare, \textbf{22} no.1 (1972),
  287--334 .
  
\bibitem{GM2000} J. Grifone, Z. Muzsnay: \emph{Variational Principles for Second Order Differential Equations. Application of the Spencer Theory to Characterize Variational Sprays},  World Scientific, 2000.   

\bibitem{K1890} W. Killing: \emph{Die Zusammensetzung der stetigen endlichen Transformationsgruppen}, Mathematische Annalen, \textbf{35} (1890), 641--676.

\bibitem{KMS1993} I. Kolář, P. Michor, J. Slovák: \emph{Natural operations in differential geometry}, Springer-Verlag, Berlin, 1993.

\bibitem{L2004} R. Lovas: \emph{Affine and projective vector fields on spray manifolds}, Periodica Mathematica Hungarica, \textbf{48} (1–2)(2004), 165--179.

\bibitem{S1958} S. Sasaki: \emph{On the differential geometry of tangent bundles of Riemannian manifolds}, Tôhoku Math. J., \textbf{10}(1958), 338--354.

\bibitem{S2001} Z. Shen: \emph{Differential geometry of spray and
Finsler spaces}, Springer, 2001.

\bibitem{ST2011} J. Szilasi, A. T\'oth: \emph{Conformal vector fields on Finsler manifolds}, Communications in Mathematics, \textbf{19} (2011) 149--168.

\bibitem{SLK2014} J. Szilasi, R. Lovas, D. Kert\'esz:
  \emph{Connections, sprays and Finsler structures}, World Scientific,
  2014.

\bibitem{Y1955} K. Yano: \emph{The Theory Of Lie Derivatives And Its Applications}, North-Holland Publishing, 1955.

\end{thebibliography}
\end{document}